\newtheorem{thrm}{Theorem}[section]
\newtheorem{cor}[thrm]{Corollary}
\newtheorem{lem}[thrm]{Lemma}
\theoremstyle{definition}
\crefname{thrm}{Theorem}{Theorems}
\crefname{lem}{Lemma}{Lemmas}
\crefname{cor}{Corollary}{Corollaries}
\crefname{prop}{Proposition}{Propositions}
\crefname{defn}{Definition}{Definitions}
\crefname{exm}{Example}{Examples}
\crefname{rem}{Remark}{Remarks}
\crefname{section}{Section}{Sections}
\crefname{equation}{\unskip}{\unskip}
\crefname{enumi}{\unskip}{\unskip}
\newcommand{\ad}{\mathrm{ad}}
\begin{document}

\title[Local Derivations of Finitary Incidence Algebras]{Local Derivations of Finitary Incidence Algebras}

\author{Mykola Khrypchenko}
\address{Departamento de Matem\'atica, Universidade Federal de Santa Catarina,  Campus Reitor Jo\~ao David Ferreira Lima, Florian\'opolis, SC, CEP: 88040--900, Brazil}
\email{nskhripchenko@gmail.com}

\begin{abstract}
Let $P$ be a partially ordered set, $R$ a commutative ring with identity and $FI(P,R)$ the finitary incidence algebra of $P$ over $R$. In this note we prove that each $R$-linear local derivation of $FI(P,R)$ is a derivation, which partially generalizes Theorem 3 of \cite{Nowicki-Nowosad04}.
\end{abstract}

\subjclass[2010]{Primary 16W25; Secondary 16S50}

\keywords{Derivation, local derivation, finitary incidence algebra}

\maketitle

\section*{Introduction}\label{intro}

Local derivations appeared in the early 90's in the works by Kadison~\cite{Kadison90} and Larson-Sourour~\cite{Larson-Sourour}. Kadison proved in~\cite[Theorem A]{Kadison90} that each local derivation of a von Neumann algebra with values in its dual bimodule is a derivation. Bre\v{s}ar showed in~\cite{Bresar92} that Theorem A by Kadison remains valid for any normed bimodule. The main result of Larson and Sourour~\cite{Larson-Sourour} says that the algebra of all bounded operators on a complex infinite-dimensional Banach space has no proper local derivations. An alternative proof of this fact (which also works in the real case) was given in~\cite{Bresar-Semrl93}. In the case of $2$-local derivations one can even drop the linearity and continuity as was shown by \v{S}emrl in~\cite{Semrl97}.

The incidence algebra $I(P,R)$ of a locally finite preordered set $P$ over a commutative ring $R$ is a classical object in the area of derivations and their generalizations. When $|P|=n<\infty$, the algebra $I(P,R)$ can be seen as a subalgebra of the full matrix algebra $M_n(R)$, and by this reason $I(P,R)$ is sometimes called a structural matrix algebra. We would like to note that $M_n(R)$, as well as its subalgebra $T_n(R)$ of upper triangular matrices over $R$, are particular cases of $I(P,R)$. On the other hand, if $P$ is finite and connected with $|P|\ge 2$, then $I(P,R)$ is a triangular algebra~\cite{Zhang-Yu06}\footnote{In~\cite{Anderson-Winders09} such an algebra is called the idealization of a bimodule.} (when $P$ is finite, but not necessarily connected, one has $I(P,R)=\bigoplus_{j=1}^k I(P_j,R)$, where $P_1,\dots,P_k$ are the connected components of $P$, so if each $P_j$ has at least $2$ elements, then $I(P,R)$ is a direct sum of triangular algebras). The case of finite $P$ is easier to deal with, 
since $I(P,R)$ possesses the natural basis formed by matrix units, and it only suffices to study the behavior of a derivation on the elements of the basis (see~\cite{Nowicki83,Nowicki84-der,Coelho-Polcino93,Jondrup95,Nowicki-Nowosad04,Benkovic05,Benkovic07,Ghosseiri07,Chen-Zhang08,Zhao-Yao-Wang10,Alizadeh-Bitafaran}). In the infinite case the latter does not work (unless one imposes some extra restrictions as in~\cite{Xiao15}), and some other technique is needed (see~\cite{Baclawski72,Scharlau74,Koppinen95,Khripchenko12,Khrypchenko16,Zhang-Khrypchenko}).

Based on an earlier work by Nowicki~\cite{Nowicki83}, Nowicki and Nowosad proved in~\cite[Theorem 3]{Nowicki-Nowosad04} that each $R$-linear local derivation of $I(P,R)$ is a derivation, provided that $P$ is a finite preordered set and $R$ is a commutative ring. Alizadeh and Bitarafan improved a particular case of~\cite[Theorem 3]{Nowicki-Nowosad04} by showing in~\cite[Theorem 3.7]{Alizadeh-Bitafaran} that $M_n(R)$ has no proper (additive, but not necessarily $R$-linear) local derivations with values in $M_n(\mathcal{M})$, where $\mathcal{M}$ is $2$-torsion free central $R$-bimodule and $n\ge 3$. Applying arguments similar to those used by Nowicki and Nowosad~\cite{Nowicki-Nowosad04}, Zhao, Yao and Wang proved in~\cite[Theorem 2.1]{Zhao-Yao-Wang10} that each local Jordan derivation of $T_n(R)$ is a derivation.

In this short note, which was inspired by the recent preprint~\cite{CDH} by Courtemanche, Dugas and Herden, we adapt the ideas from~\cite{Nowicki-Nowosad04} to the infinite case using the technique elaborated in~\cite{Khripchenko12,Khrypchenko16}. More precisely, we show that each $R$-linear local derivation of the finitary incidence algebra $FI(P,R)$ of an arbitrary poset $P$ over a commutative unital ring $R$ is a derivation, giving thus another partial generalization of~\cite[Theorem 3]{Nowicki-Nowosad04}.

\section{Preliminaries}\label{prelim}

Let $R$ be a ring. An additive map $d:R\to R$ is called a {\it derivation} of $R$, if it satisfies
\begin{align*}
 d(rs)=d(r)s+rd(s)
\end{align*}
for all $r,s\in R$. Each $a\in R$ defines the derivation $\ad_a$, given by $\ad_a(r)=ar-ra$. A derivation of such a form is called {\it inner}. A {\it local derivation}~\cite{Kadison90,Larson-Sourour} of $R$ is an additive map $d:R\to R$, such that for any $r\in R$ there is a derivation $d_r$ of $R$ with $d(r)=d_r(r)$. Obviously, each derivation of $R$ is a local derivation of $R$. Observe also that for any local derivation $d$ of $R$ and any idempotent $e$ of $R$ one has
\begin{align}\label{d(e)=d(e)e+ed(e)}
 d(e)=d_e(e)=d_e(e)e+ed_e(e)=d(e)e+ed(e).
\end{align}

Let $(P,\le)$ be a partially ordered set and $R$ a commutative ring with identity. With any pair of $x\le y$ from $P$ associate a symbol $e_{xy}$ and denote by $I(P,R)$ the $R$-module of formal sums
\begin{align}\label{formal-sum}
 \alpha=\sum_{x\le y}\alpha(x,y)e_{xy},
\end{align}
where $\alpha(x,y)\in R$. If $x$ and $y$ run through a subset $X$ of the ordered pairs $x\le y$ in the sum \cref{formal-sum}, then it is meant that $\alpha(x,y)=0$ for any pair $x\le y$ which does not belong to $X$.

The sum \cref{formal-sum} is called a {\it finitary series}~\cite{KN-fi}, whenever for any pair of $x,y\in P$ with $x<y$ there exists only a finite number of $u,v\in P$, such that $x\le u<v\le y$ and $\alpha(u,v)\ne 0$. The set of finitary series, denoted by $FI(P,R)$, is an $R$-submodule of $I(P,R)$ which is closed under the convolution of the series:
\begin{align}\label{conv-series}
 \alpha\beta=\sum_{x\le y}\left(\sum_{x\le z\le y}\alpha(x,z)\beta(z,y)\right)e_{xy}
\end{align}
for $\alpha,\beta\in FI(P,R)$. Thus, $FI(P,R)$ is an $R$-algebra, called the {\it finitary incidence algebra of $P$ over $R$}. Moreover, $I(P,R)$ is a bimodule over $FI(P,R)$ under \cref{conv-series}.

\section{Local derivations of $FI(P,R)$}

Given $x\le y$, we identify $e_{xy}$ with the series $1_Re_{xy}\in FI(P,R)$. Note that
\begin{align}\label{e_xy-times-e_uv}
 e_{xy}e_{uv}=\delta_{yu}e_{xv},
\end{align}
where $\delta$ is the Kronecker delta. In particular, the elements $e_x:=e_{xx}$ are orthogonal idempotents of $FI(P,R)$, and for any $\alpha\in FI(P,R)$ one has
\begin{align}\label{e_x-alpha-e_y}
 e_x\alpha e_y=
 \begin{cases}
  \alpha(x,y)e_{xy}, & x\le y,\\
  0, & x\not\le y.
 \end{cases}
\end{align}
We shall also consider the idempotents $e_X:=\sum_{x\in X}1_Re_{xx}\in FI(P,R)$, where $X\subseteq P$.

For any $\alpha\in FI(P,R)$ and $x\le y$ we define
\begin{align}\label{alpha|_x^y}
 \alpha|_x^y=\alpha(x,y)e_{xy}+\sum_{x\le v<y}\alpha(x,v)e_{xv}+\sum_{x<u\le y}\alpha(u,y)e_{uy}.
\end{align}
Observe that the sums in \cref{alpha|_x^y} are finite, so $\alpha\mapsto\alpha|_x^y$ is a well-defined map $FI(P,R)\to FI(P,R)$. Moreover, it is $R$-linear and satisfies
\begin{align}
(\alpha|_x^y)|_x^y&=\alpha|_x^y,\label{double-restr=restr}\\
 (e_X)|_x^y&=e_{X\cap\{x,y\}}.\label{restr-of-e_X}
\end{align}
The next result is a partial generalization of \cite[Lemma 8]{Khripchenko12}.

\begin{lem}\label{d(alpha)(xy)=d(alpha_x^y)(xy)}
 For each $R$-linear local derivation $d$ of $FI(P,R)$ and $x\le y$ one has
\begin{align}\label{d(alpha)(xy)}
 d(\alpha)(x,y)=d(\alpha|_x^y)(x,y).
\end{align} 
\end{lem}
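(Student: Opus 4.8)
The plan is to reduce the identity to the special case $\alpha|_x^y = 0$ and then to exploit the Leibniz rule for the derivation supplied by the locality of $d$. Since the operation $\alpha \mapsto \alpha|_x^y$ is $R$-linear and idempotent and $d$ is $R$-linear, putting $\beta := \alpha - \alpha|_x^y$ gives $\beta|_x^y = 0$ and $d(\alpha)(x,y) - d(\alpha|_x^y)(x,y) = d(\beta)(x,y)$, so it suffices to show that $\beta|_x^y = 0$ forces $d(\beta)(x,y) = 0$; by \cref{e_x-alpha-e_y} this is the same as $e_x d(\beta) e_y = 0$.

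So fix $\beta$ with $\beta|_x^y = 0$ and choose a derivation $d_\beta$ of $FI(P,R)$ with $d(\beta) = d_\beta(\beta)$. Set $e := e_{\{x,y\}}$ (thus $e = e_x$ if $x = y$, and $e = e_x + e_y$ otherwise) and $f := e_{P\setminus\{x,y\}}$; these are orthogonal idempotents with $e + f = e_P$, the identity of $FI(P,R)$, so that
\begin{align*}
 \beta = e\beta e + e\beta f + f\beta e + f\beta f.
\end{align*}
From \cref{alpha|_x^y} and $\beta|_x^y = 0$ we read off $\beta(x,x) = \beta(x,y) = \beta(y,y) = 0$, hence $e\beta e = 0$ and this summand contributes nothing to $e_x d_\beta(\beta) e_y$. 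For each of the remaining three summands I apply $d_\beta$, expand by the Leibniz rule for products of three factors, and multiply by $e_x$ on the left and $e_y$ on the right. Using $e_x e = e_x$, $e e_y = e_y$, $e_x f = 0$ and $f e_y = 0$, all but two of the resulting terms vanish at once; the survivors are $e_x \beta\, d_\beta(f)\, e_y$ (from $e\beta f$) and $e_x\, d_\beta(f)\,\beta e_y$ (from $f\beta e$). To dispose of these I use the support information hidden in $\beta|_x^y = 0$, namely that $\beta(x,z) = 0$ for every $z$ with $x \le z \le y$: this gives $e_x\beta = \sum_{z \ge x,\, z \not\le y} \beta(x,z) e_{xz}$ and, dually, $\beta e_y = \sum_{z \le y,\, z \not\ge x} \beta(z,y) e_{zy}$. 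Since $e_{xz}\gamma e_y = 0$ whenever $z \not\le y$ and $e_x\gamma e_{zy} = 0$ whenever $z \not\ge x$ (both by \cref{e_xy-times-e_uv} and \cref{e_x-alpha-e_y}), the two surviving terms vanish as well. Adding the four contributions yields $e_x d_\beta(\beta) e_y = e_x d(\beta) e_y = 0$.

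The routine parts are the Leibniz expansions; the one point that needs genuine care is checking, summand by summand, that every term which is not immediately annihilated by the orthogonality relations among $e_x$, $e_y$ and $f$ is annihilated instead by the support restrictions forced by $\beta|_x^y = 0$. It is worth noting that the locality hypothesis enters only through the single identity $d(\beta) = d_\beta(\beta)$: the map $d_\beta$ is applied to the four summands of $\beta$, but only its additivity (so that $d_\beta(e\beta e) = d_\beta(0) = 0$) and its Leibniz rule are used.
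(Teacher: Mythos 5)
Your proof is correct, and while the locality step coincides with the paper's, the derivation step takes a genuinely different route. Like the author, you use $R$-linearity and idempotency of $\alpha\mapsto\alpha|_x^y$ to reduce everything to $\beta:=\alpha-\alpha|_x^y$ with $\beta|_x^y=0$, and you invoke locality only through the single identity $d(\beta)=d_\beta(\beta)$. The paper, however, first proves the lemma for an \emph{arbitrary} derivation by expanding $d(e_x\alpha e_y)=d(e_x)\alpha e_y+e_xd(\alpha)e_y+e_x\alpha d(e_y)$ and solving for $d(\alpha)(x,y)$, as in \cref{d(af)(x_y)=d(af|_x^y)(x_y)}; since the correction terms $(d(e_x)\alpha)(x,y)$ and $(\alpha d(e_y))(x,y)$ only involve $\alpha(z,y)$ and $\alpha(x,z)$ for $x\le z\le y$, the right-hand side is unchanged when $\alpha$ is replaced by $\alpha|_x^y$, and the local case follows by specializing this to $d_\beta$ and $\beta$. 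You instead prove only the needed special case $d_\beta(\beta)(x,y)=0$ directly, via the Peirce decomposition of $\beta$ with respect to $e=e_{\{x,y\}}$ and the complementary idempotent $f=e_{P\setminus\{x,y\}}$ (a legitimate element of $FI(P,R)$, since $e_X$ is defined for arbitrary $X\subseteq P$ and is purely diagonal). The paper's route is shorter and yields a reusable explicit formula valid for every derivation; yours is more computational but makes transparent exactly which entries of $\beta$ can influence $d_\beta(\beta)(x,y)$ --- the row $\beta(x,z)$ and column $\beta(z,y)$ for $x\le z\le y$, which is precisely the data recorded in $\beta|_x^y$. Two small polishing remarks: your two surviving terms can be folded back into the paper's shape by noting that $e+f$ is the identity, so $d_\beta(f)=-d_\beta(e)=-d_\beta(e_x)-d_\beta(e_y)$; and the cleanest way to kill them is not to manipulate the infinite formal sum $e_x\beta$ but simply to apply the convolution formula \cref{conv-series}, which writes the $(x,y)$ entry of $\beta\,d_\beta(f)$ as the finite sum $\sum_{x\le z\le y}\beta(x,z)\,d_\beta(f)(z,y)$, every term of which vanishes because $\beta|_x^y=0$.
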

\begin{proof}
 We first assume that $d$ is an $R$-linear derivation of $FI(P,R)$. By \cref{e_x-alpha-e_y}
 $$
 d(\alpha(x,y)e_{xy})=d(e_x\alpha e_y)=d(e_x)\alpha e_y+e_xd(\alpha)e_y+e_x\alpha d(e_y),
 $$
 whence
 \begin{align}\label{d(af)(x_y)=d(af|_x^y)(x_y)}
  d(\alpha)(x,y)&=d(\alpha(x,y)e_{xy})(x,y)-(d(e_x)\alpha)(x,y)-(\alpha d(e_y))(x,y).
 \end{align}
In view of \cref{conv-series,alpha|_x^y} the right-hand side of \cref{d(af)(x_y)=d(af|_x^y)(x_y)} is
\begin{align*}
d((\alpha|_x^y)(x,y)e_{xy})(x,y)-(d(e_x)\alpha|_x^y)(x,y)-(\alpha|_x^y d(e_y))(x,y),
\end{align*}
which is $d(\alpha|_x^y)(x,y)$ by the same \cref{d(af)(x_y)=d(af|_x^y)(x_y)}, whence \cref{d(alpha)(xy)}.

Now let $d$ be an $R$-linear local derivation of $FI(P,R)$. Then using the result of the previous case and \cref{double-restr=restr}
\begin{align*}
 d(\alpha)(x,y)&=d(\alpha-\alpha|_x^y)(x,y)+d(\alpha|_x^y)(x,y)\\
 &=d_{\alpha-\alpha|_x^y}((\alpha-\alpha|_x^y)|_x^y)(x,y)+d(\alpha|_x^y)(x,y)\\
 &=d_{\alpha-\alpha|_x^y}(\alpha|_x^y-(\alpha|_x^y)|_x^y)(x,y)+d(\alpha|_x^y)(x,y)\\
 &=d(\alpha|_x^y)(x,y),
\end{align*}
which proves \cref{d(alpha)(xy)}.
\end{proof}

We shall also need the following lemma which partially generalizes Lemma 1 from~\cite{Khripchenko12}.
\begin{lem}\label{d(e_X)(u_v)=d(e_u)(u_v)-or-d(e_v)(u_v)}
 Let $d$ be an $R$-linear local derivation of $FI(P,R)$ and $X\subseteq P$. Then for all $u\le v$ one has 
 \begin{align}\label{d(e_X)-in-terms-of-d(e_u)}
  d(e_X)(u,v)=\begin{cases}
          d(e_u)(u,v), & \mbox{if $u\in X$ and $v\not\in X$},\\
          d(e_v)(u,v), & \mbox{if $u\not\in X$ and $v\in X$},\\
          0, & \mbox{otherwise}.
         \end{cases}
\end{align}
\end{lem}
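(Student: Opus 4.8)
The plan is to push everything down to the pair $\{u,v\}$ via \cref{d(alpha)(xy)=d(alpha_x^y)(xy)} and then read off the answer from the idempotent identity \cref{d(e)=d(e)e+ed(e)}. First I would apply \cref{d(alpha)(xy)=d(alpha_x^y)(xy)} to $\alpha=e_X$, which together with \cref{restr-of-e_X} gives
\begin{align*}
 d(e_X)(u,v)=d((e_X)|_u^v)(u,v)=d(e_{X\cap\{u,v\}})(u,v).
\end{align*}
Hence it suffices to compute $d(e_Y)(u,v)$ for $Y=X\cap\{u,v\}$, a set with at most two elements; in particular the possibly infinite idempotent $e_X$ and any appeal to $R$-linearity drop out of the problem at this point.

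Next comes a short case analysis on $Y$. If $u\notin X$ and $v\notin X$, then $Y=\emptyset$, so $e_Y=0$ and $d(e_X)(u,v)=d(0)(u,v)=0$ since $d$ is additive. If $u\in X$ and $v\notin X$ (which forces $u<v$), then $Y=\{u\}$ and $d(e_X)(u,v)=d(e_u)(u,v)$; the case $u\notin X$, $v\in X$ is symmetric. The only remaining case is $u\in X$ and $v\in X$, where the claim is that $d(e_X)(u,v)=0$; note that here $Y=\{u\}$ when $u=v$ and $Y=\{u,v\}$ when $u<v$.

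For that last case, put $g=e_Y$, so $g=e_u$ if $u=v$ and $g=e_u+e_v$ if $u<v$. Using \cref{e_xy-times-e_uv} one checks in both cases that $e_ug=e_u$ and $ge_v=e_v$. Since $g$ is an idempotent of $FI(P,R)$, \cref{d(e)=d(e)e+ed(e)} gives $d(g)=d(g)g+gd(g)$; multiplying this on the left by $e_u$ and on the right by $e_v$ and substituting $ge_v=e_v$ and $e_ug=e_u$ yields $e_ud(g)e_v=2\,e_ud(g)e_v$, whence $e_ud(g)e_v=0$. By \cref{e_x-alpha-e_y} this says $d(g)(u,v)=0$, i.e.\ $d(e_X)(u,v)=0$, which completes the proof. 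I do not expect a genuine obstacle here; the one point that needs care is to keep the final computation uniform over the two sub-cases $u=v$ and $u<v$, i.e.\ to verify $e_ug=e_u$ and $ge_v=e_v$ for both shapes of $g$.
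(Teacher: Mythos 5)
Your proof is correct, and for three of the four cases it coincides with the paper's: both reduce $d(e_X)(u,v)$ to $d(e_{X\cap\{u,v\}})(u,v)$ via \cref{restr-of-e_X} and \cref{d(alpha)(xy)=d(alpha_x^y)(xy)} and then read off the first two cases and the case $u,v\notin X$ directly. Where you diverge is the case $u,v\in X$. The paper handles it by invoking the local-derivation property once more, writing $d(e_X)(u,v)=d_{e_X}(e_X)(u,v)$ with $d_{e_X}$ a genuine derivation, and then citing an external result (Lemma~1 of \cite{Khripchenko12}) asserting that a derivation of $FI(P,R)$ vanishes on $e_X$ at pairs $(u,v)$ with $u,v\in X$. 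You instead first reduce to the finite idempotent $g=e_{X\cap\{u,v\}}$ and then run a self-contained computation: from \cref{d(e)=d(e)e+ed(e)} you get $d(g)=d(g)g+gd(g)$, and multiplying by $e_u$ on the left and $e_v$ on the right, using $e_ug=e_u$ and $ge_v=e_v$, gives $e_ud(g)e_v=2e_ud(g)e_v$, hence $e_ud(g)e_v=0$ and $d(g)(u,v)=0$ by \cref{e_x-alpha-e_y} --- with no torsion hypothesis needed, since $a=2a$ forces $a=0$ in any abelian group. This buys you independence from the cited lemma of \cite{Khripchenko12} at the cost of a slightly longer (but completely elementary) verification; the paper's version is shorter on the page but leans on prior work. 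One small remark: your aside that ``any appeal to $R$-linearity drops out'' after the reduction is slightly misleading, since the reduction step itself (\cref{d(alpha)(xy)=d(alpha_x^y)(xy)}) is only proved for $R$-linear local derivations; the hypothesis is still doing work, just earlier in the argument.
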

\begin{proof}
 The first two cases of \cref{d(e_X)-in-terms-of-d(e_u)}, as well as the case $u,v\not\in X$, are immediate consequences of \cref{restr-of-e_X,d(alpha)(xy)=d(alpha_x^y)(xy)}. Now let $u,v\in X$. Then $d(e_X)(u,v)=d_{e_X}(e_X)(u,v)$, the latter being zero by \cite[Lemma 1]{Khripchenko12}.
\end{proof}

\begin{cor}\label{d(e_x)(xy)-and-d(e_y)(xy)}
 Let $d$ be an $R$-linear local derivation of $FI(P,R)$ and $x\le y$. Then
 \begin{align}\label{d(e_x)(xy)=-d(e_y)(xy)}
 d(e_x)(x,y)=-d(e_y)(x,y).
 \end{align} 
\end{cor}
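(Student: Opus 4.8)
The plan is to deduce this immediately from \cref{d(e_X)(u_v)=d(e_u)(u_v)-or-d(e_v)(u_v)} by choosing the right set $X$. Take $X=\{x,y\}$. Then $u:=x$ and $v:=y$ satisfy $u\le v$ and both lie in $X$, so we land in the ``otherwise'' case of \cref{d(e_X)-in-terms-of-d(e_u)}, which gives $d(e_{\{x,y\}})(x,y)=0$.

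Next I would use $R$-linearity of $d$ together with the fact that $e_{\{x,y\}}=e_x+e_y$ (which is just the definition of $e_X$ for $X=\{x,y\}$, valid whether or not $x=y$, in which case the sum collapses to a single term). Since the evaluation $\alpha\mapsto\alpha(x,y)$ is $R$-linear as well, we get
\begin{align*}
 0=d(e_{\{x,y\}})(x,y)=d(e_x)(x,y)+d(e_y)(x,y),
\end{align*}
whence \cref{d(e_x)(xy)=-d(e_y)(xy)}.

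There is essentially no obstacle here: the content is entirely in the already-proved \cref{d(e_X)(u_v)=d(e_u)(u_v)-or-d(e_v)(u_v)}, and the only thing to be mildly careful about is the degenerate case $x=y$, where both sides of \cref{d(e_x)(xy)=-d(e_y)(xy)} are forced to be $0$ (consistently with \cref{d(e)=d(e)e+ed(e)} applied to the idempotent $e_x$), so the statement still holds. One could alternatively bypass the corollary-from-lemma route and argue directly from \cref{restr-of-e_X,d(alpha)(xy)=d(alpha_x^y)(xy)} applied to $\alpha=e_{\{x,y\}}$, but invoking \cref{d(e_X)(u_v)=d(e_u)(u_v)-or-d(e_v)(u_v)} is the cleanest presentation.
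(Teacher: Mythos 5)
Your proof is correct and follows essentially the same route as the paper: both apply the ``otherwise'' case of \cref{d(e_X)(u_v)=d(e_u)(u_v)-or-d(e_v)(u_v)} to $X=\{x,y\}$ and then use linearity, with the paper simply separating the cases $x=y$ and $x<y$ explicitly where you note the degenerate case in passing.
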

\noindent Indeed, if $x=y$, then $d(e_x)(x,x)=0$ thanks to \cref{d(e_X)(u_v)=d(e_u)(u_v)-or-d(e_v)(u_v)}, and if $x<y$, then $d(e_x+e_y)(x,y)=d(e_{\{x,y\}})(x,y)=0$ by the same reason.\\

The following fact is a partial generalization of~\cite[Lemma 2]{Khripchenko12}.
\begin{lem}\label{d(e_x)=ad_alpha(e_x)}
 Let $d$ be an $R$-linear local derivation of $FI(P,R)$. Then there exists $\alpha\in FI(P,R)$ such that $d(e_x)=\ad_\alpha(e_x)$ for all $x\in P$.
\end{lem}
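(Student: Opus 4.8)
The plan is to build $\alpha$ by reading off the entries that the equalities $d(e_x)=\ad_\alpha(e_x)$ force upon it, and then to show separately that the series so obtained is finitary.

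First I would pin down the shape of $d(e_x)$. Combining \cref{d(alpha)(xy)=d(alpha_x^y)(xy)} with \cref{restr-of-e_X} one gets $d(e_x)(u,v)=d((e_x)|_u^v)(u,v)=d(e_{\{x\}\cap\{u,v\}})(u,v)$, which is $0$ whenever $x\notin\{u,v\}$, while \cref{d(e_X)(u_v)=d(e_u)(u_v)-or-d(e_v)(u_v)} gives $d(e_x)(x,x)=0$; thus $d(e_x)$ is concentrated, off the diagonal, on the row and the column through $x$. On the other hand, for any $\beta\in I(P,R)$ a short computation with \cref{conv-series} shows that $\beta e_x-e_x\beta$ equals $\beta(u,x)$ in each position $(u,x)$ with $u<x$, equals $-\beta(x,v)$ in each position $(x,v)$ with $x<v$, and vanishes in every remaining position. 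Hence the only possible choice is the series $\alpha\in I(P,R)$ with $\alpha(x,x)=0$ and $\alpha(u,v)=d(e_v)(u,v)$ for $u<v$; and with it the identity $d(e_x)=\alpha e_x-e_x\alpha$ holds entry by entry for every $x\in P$, since $\alpha(u,x)=d(e_x)(u,x)$ for $u<x$ by definition, and $-\alpha(x,v)=-d(e_v)(x,v)=d(e_x)(x,v)$ for $x<v$ by \cref{d(e_x)(xy)-and-d(e_y)(xy)}.

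The remaining, and principal, point is to verify that $\alpha$ lies in $FI(P,R)$, i.e.\ that for each fixed $x<y$ only finitely many pairs $u<v$ with $x\le u<v\le y$ satisfy $d(e_v)(u,v)\ne0$. Assume otherwise, and view these infinitely many pairs as the edges of a graph $G$ on $P$. A graph with infinitely many edges either has a vertex of infinite degree or, by a greedy construction (at each step only finitely many edges meet the finitely many vertices already chosen, while infinitely many remain), contains an infinite matching; it is thus enough to contradict each alternative. If some $w$ is the larger endpoint of infinitely many of our pairs $(u,w)$, then $d(e_w)(u,w)\ne0$ for infinitely many $u$ with $x\le u<w\le y$, contradicting finitariness of the single series $d(e_w)$. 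If some $w$ is the smaller endpoint of infinitely many of our pairs $(w,v)$, put $Z=\{v:(w,v)\text{ is one of these pairs}\}$; then $w\notin Z$, so \cref{d(e_X)(u_v)=d(e_u)(u_v)-or-d(e_v)(u_v)} gives $d(e_Z)(w,v)=d(e_v)(w,v)\ne0$ for every $v\in Z$, contradicting finitariness of $d(e_Z)$ on $[x,y]$. Finally, if $G$ contains an infinite matching $\{u_i,v_i\}_{i\ge1}$ with each $u_i<v_i$, then the sets $\{v_i:i\ge1\}$ and $\{u_i:i\ge1\}$ are disjoint, so with $Z=\{v_i:i\ge1\}$ the same \cref{d(e_X)(u_v)=d(e_u)(u_v)-or-d(e_v)(u_v)} gives $d(e_Z)(u_i,v_i)=d(e_{v_i})(u_i,v_i)\ne0$ for all $i$, again contradicting finitariness of $d(e_Z)$.

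I expect this finitariness step to be the main obstacle. The entries of $\alpha$ inside an interval $[x,y]$ are drawn from infinitely many distinct series $d(e_v)$, and the awkward ``interior'' pairs $x<u<v<y$ are controlled neither by the finitariness of any single $d(e_x)$ nor by that of $d(e_{[x,y]})$, which vanishes on such pairs by \cref{d(e_X)(u_v)=d(e_u)(u_v)-or-d(e_v)(u_v)}. The device that makes it work is to gather the ``tops'' $v$ into one set $Z$ chosen so as to avoid the matching ``bottoms'' $u$, so that the finitary series $d(e_Z)$ simultaneously records all the relevant values $d(e_v)(u,v)$; the dichotomy between an infinite-degree vertex and an infinite matching is exactly what lets one choose such a $Z$. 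Once $\alpha\in FI(P,R)$ is established, the equalities $d(e_x)=\ad_\alpha(e_x)$ have already been checked, and the proof is complete.
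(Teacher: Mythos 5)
Your proof is correct and takes essentially the same route as the paper: the same series $\alpha(u,v)=d(e_v)(u,v)$, the identity $d(e_x)=\ad_\alpha(e_x)$ via \cref{d(e_x)(xy)-and-d(e_y)(xy)}, and finitariness obtained by applying \cref{d(e_X)(u_v)=d(e_u)(u_v)-or-d(e_v)(u_v)} to an idempotent $e_Z$ whose index set contains one endpoint of each offending pair but avoids the other, contradicting the finitariness of $d(e_Z)$. Your dichotomy (infinite-degree vertex versus infinite matching) is just a tidier packaging of the paper's construction of the subset $S'$, and your entrywise verification of $d(e_x)=\alpha e_x-e_x\alpha$ replaces the paper's appeal to \cref{d(e)=d(e)e+ed(e)} without changing the substance.
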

\begin{proof}
 Define 
 \begin{align*}
 \alpha=\sum_{x\le y}d(e_y)(x,y)e_{xy}\in I(P,R).
 \end{align*}
 Then $\alpha e_x=d(e_x)e_x$, and since by \cref{d(e_x)(xy)=-d(e_y)(xy)}
  \begin{align*}
 \alpha=-\sum_{x\le y}d(e_x)(x,y)e_{xy},
 \end{align*} 
one similarly has $e_x\alpha=-e_xd(e_x)$. So, by \cref{d(e)=d(e)e+ed(e)}
 $$
 d(e_x)=d(e_x)e_x+e_xd(e_x)=\alpha e_x-e_x\alpha=\ad_\alpha(e_x).
 $$
 It remains to prove that $\alpha\in FI(P,R)$. Suppose that there is an infinite set $S$ of pairs $(x_i,y_i)$, such that $x\le x_i<y_i\le y$ and $\alpha(x_i,y_i)\ne 0$. For each fixed $u$ there is only a finite number of $i$ such that $x_i=u$, as $d(e_u)(u,y_i)=-\alpha(x_i,y_i)\ne 0$ for such $u$ and $d(e_u)$ is a finitary series. Similarly for each $v$ there is only a finite number of $j$ such that $y_j=v$. Using this observation, similarly to what was done in the proof of~\cite[Lemma 2]{Khripchenko12}, we may construct an infinite $S'\subseteq S$, such that for any two pairs $(x_i,y_i)$ and $(x_j,y_j)$ from $S'$ one has $x_i\ne y_j$. Let $X=\{x_i\mid (x_i,y_i)\in S'\}$. Note that $y_i\not\in X$ for any $(x_i,y_i)\in S'$. So, using \cref{d(e_X)(u_v)=d(e_u)(u_v)-or-d(e_v)(u_v)}, we have for all $(x_i,y_i)\in S'$
 \begin{align*}
  d(e_X)(x_i,y_i)&=d(e_{X\setminus\{x_i\}}+e_{x_i})(x_i,y_i)\\
  &=d(e_{X\setminus\{x_i\}})(x_i,y_i)+d(e_{x_i})(x_i,y_i)\\
  &=d(e_{x_i})(x_i,y_i)=-\alpha(x_i,y_i)\ne 0.
 \end{align*}
 This contradicts the fact that $d(e_X)\in FI(P,R)$.
\end{proof}

It follows from \cref{d(e_x)=ad_alpha(e_x)} that it suffices to describe the local derivations of $FI(P,R)$ which satisfy 
\begin{align}\label{d(e_x)-is-zero}
 d(e_x)=0
\end{align}
for all $x\in P$.

\begin{lem}\label{d(e_x)=0}
 Let $d$ be an $R$-linear local derivation of $FI(P,R)$ satisfying \cref{d(e_x)-is-zero} for all $x\in P$. Then there exists $\sigma\in I(P,R)$, such that 
 \begin{align}\label{d(alpha)(xy)=sigma(xy)alpha(xy)}
 d(\alpha)(x,y)=\sigma(x,y)\alpha(x,y)  
 \end{align}
for all $\alpha\in FI(P,R)$ and $x\le y$.
\end{lem}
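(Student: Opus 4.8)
The plan is to write down the candidate diagonal series $\sigma$ explicitly and then reduce \cref{d(alpha)(xy)=sigma(xy)alpha(xy)} to showing that $d$ kills certain "off‑corner" generators. Set $\sigma(x,y)=d(e_{xy})(x,y)$ for every $x\le y$; since $I(P,R)$ carries no finiteness restriction, this already defines $\sigma\in I(P,R)$ with no further checking. For $x\le y$, \cref{d(alpha)(xy)=d(alpha_x^y)(xy)}, the finiteness of the sums in \cref{alpha|_x^y} and the $R$-linearity of $d$ give
\begin{align*}
 d(\alpha)(x,y)&=d(\alpha|_x^y)(x,y)\\
 &=\alpha(x,y)\,d(e_{xy})(x,y)+\sum_{x\le v<y}\alpha(x,v)\,d(e_{xv})(x,y)\\
 &\quad+\sum_{x<u\le y}\alpha(u,y)\,d(e_{uy})(x,y).
\end{align*}
Since $R$ is commutative, \cref{d(alpha)(xy)=sigma(xy)alpha(xy)} will therefore follow once we show that $d(e_{xv})(x,y)=0$ for all $x\le v<y$ and $d(e_{uy})(x,y)=0$ for all $x<u\le y$. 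When $x=y$ both sums are empty; when $v=x$ or $u=y$ the required identity is exactly hypothesis \cref{d(e_x)-is-zero}; so it remains to treat $x<v<y$ and $x<u<y$.

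For $x<v<y$, put $\alpha=e_v+e_{xv}\in FI(P,R)$. Using \cref{e_xy-times-e_uv} one checks that $\alpha^2=\alpha$, so \cref{d(e)=d(e)e+ed(e)} gives $d(\alpha)=d(\alpha)\alpha+\alpha d(\alpha)$. Now evaluate at $(x,y)$ through \cref{conv-series}: the nonzero entries of $\alpha$ all lie in column $v$ and $v\ne y$, so $\bigl(d(\alpha)\alpha\bigr)(x,y)=0$; and since $\alpha(x,v)=1_R$ with $x\le v\le y$, we get $\bigl(\alpha d(\alpha)\bigr)(x,y)=d(\alpha)(v,y)$. Hence $d(\alpha)(x,y)=d(\alpha)(v,y)$. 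Because $d(e_v)=0$, additivity of $d$ gives $d(\alpha)=d(e_{xv})$, so $d(e_{xv})(x,y)=d(e_{xv})(v,y)$. Finally, the only nonzero entry of $e_{xv}$ is at $(x,v)$, which contributes to none of the three sums defining $\alpha|_x^y$ for the pair $(v,y)$, so $(e_{xv})|_v^y=0$ and $d(e_{xv})(v,y)=d\bigl((e_{xv})|_v^y\bigr)(v,y)=0$ by \cref{d(alpha)(xy)=d(alpha_x^y)(xy)}. Thus $d(e_{xv})(x,y)=0$.

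The case $x<u<y$ is handled by the mirror‑image argument with the idempotent $\beta=e_u+e_{uy}$: its nonzero entries lie in row $u$ with $u\ne x$, so now $\bigl(\beta d(\beta)\bigr)(x,y)=0$ and $\bigl(d(\beta)\beta\bigr)(x,y)=d(\beta)(x,u)$, whence $d(e_{uy})(x,y)=d(\beta)(x,y)=d(\beta)(x,u)=d(e_{uy})(x,u)=d\bigl((e_{uy})|_x^u\bigr)(x,u)=0$ again by \cref{d(alpha)(xy)=d(alpha_x^y)(xy)}. Substituting both vanishings into the displayed formula yields $d(\alpha)(x,y)=\sigma(x,y)\alpha(x,y)$ for all $\alpha\in FI(P,R)$ and $x\le y$.

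The step I expect to be the main obstacle is spotting and exploiting the auxiliary rank‑two idempotents $e_v+e_{xv}$ and $e_u+e_{uy}$: the key point is that multiplying by them via \cref{conv-series} inside the identity \cref{d(e)=d(e)e+ed(e)} transports the evaluation of $d$ from the pair $(x,y)$ to a strictly smaller pair ($(v,y)$, resp. $(x,u)$) at which \cref{d(alpha)(xy)=d(alpha_x^y)(xy)} forces the value to be $0$, because the corresponding restriction vanishes. Everything else is routine bookkeeping with \cref{e_x-alpha-e_y,conv-series}.
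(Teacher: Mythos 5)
Your proof is correct and takes essentially the same route as the paper's: after the reduction via \cref{d(alpha)(xy)=d(alpha_x^y)(xy)} to showing $d(e_{xv})(x,y)=0$ and $d(e_{uy})(x,y)=0$, your auxiliary idempotents $e_v+e_{xv}$ and $e_u+e_{uy}$ combined with \cref{d(e)=d(e)e+ed(e)} are exactly the paper's idempotents $e_y+e_{xy}$ and $e_x+e_{xy}$ up to a relabelling of indices, and the transport of the evaluation to a pair where the restriction vanishes is the same computation.
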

\begin{proof}
 We first show that 
 \begin{align}\label{d(e_xy)(uv)=0-for-[uv]ne[xy]}
  d(e_{xy})(u,v)=0\mbox{ for }(u,v)\ne(x,y).
 \end{align}
 In view of \cref{d(e_x)-is-zero}, equality \cref{d(e_xy)(uv)=0-for-[uv]ne[xy]} is trivial, when $x=y$. For $x<y$ observe by \cref{d(alpha)(xy)=d(alpha_x^y)(xy)} that
 \begin{align}\label{d(e_xy)_uv}
 d(e_{xy})(u,v)=d((e_{xy})|_u^v)(u,v).
 \end{align}
The latter may be non-zero in the following two cases: 
\begin{enumerate}
 \item $u=x<y\le v$;\label{u=x<y-le-v}
 \item $u\le x<y=v$.\label{u-le-x<y=v}
\end{enumerate}

 \cref{u=x<y-le-v} Let $u=x<y<v$. Notice from \cref{e_xy-times-e_uv} that $e_y+e_{xy}$ is an idempotent of $FI(P,R)$, so by \cref{d(e)=d(e)e+ed(e),d(e_x)-is-zero,d(e_xy)_uv}
 \begin{align*}
  d(e_y+e_{xy})(u,v)&=(d(e_y+e_{xy})(e_y+e_{xy})+(e_y+e_{xy})d(e_y+e_{xy}))(x,v)\\
  &=d(e_y+e_{xy})(y,v)=d(e_{xy})(y,v)=0.
 \end{align*}

 \cref{u-le-x<y=v} Let $u<x<y=v$. Considering the idempotent $e_x+e_{xy}\in FI(P,R)$, as above we get
 \begin{align*}
  d(e_x+e_{xy})(u,v)&=(d(e_x+e_{xy})(e_x+e_{xy})+(e_x+e_{xy})d(e_x+e_{xy}))(u,y)\\
  &=d(e_x+e_{xy})(u,x)=d(e_{xy})(u,x)=0,
 \end{align*}
 completing the proof of \cref{d(e_xy)(uv)=0-for-[uv]ne[xy]}.
 
 Define 
 \begin{align}\label{sigma=sum-d(e_xy)(xy)}
 \sigma=\sum_{x\le y}d(e_{xy})(x,y)e_{xy}\in I(P,R).
 \end{align}
 Using \cref{d(alpha)(xy)=d(alpha_x^y)(xy),alpha|_x^y,d(e_xy)(uv)=0-for-[uv]ne[xy]} and linearity of $d$ we conclude that
 \begin{align*}
  d(\alpha)(x,y)=d(\alpha|_x^y)(x,y)=\alpha(x,y)d(e_{xy})(x,y)=\sigma(x,y)\alpha(x,y).
 \end{align*}

\end{proof}

\begin{lem}\label{sigma-cocycle}
 Let $d$ be as in \cref{d(e_x)=0}. Then the corresponding element $\sigma\in I(P,R)$ given by \cref{sigma=sum-d(e_xy)(xy)} satisfies 
 \begin{align}\label{sigma(xy)+sigma(yz)=sigma(xz)}
 \sigma(x,y)+\sigma(y,z)=\sigma(x,z)  
 \end{align}
for all $x\le y\le z$.
\end{lem}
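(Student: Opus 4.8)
The plan is to deduce \cref{sigma(xy)+sigma(yz)=sigma(xz)} from \cref{d(e)=d(e)e+ed(e)} applied to a well-chosen idempotent of $FI(P,R)$. First I would dispose of the degenerate cases: if $x=y$, then \cref{sigma(xy)+sigma(yz)=sigma(xz)} amounts to $\sigma(x,x)=0$, and if $y=z$, it amounts to $\sigma(y,y)=0$; both hold because, by \cref{sigma=sum-d(e_xy)(xy),d(e_x)-is-zero}, one has $\sigma(w,w)=d(e_w)(w,w)=0$ for every $w\in P$. From now on assume $x<y<z$.

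The heart of the matter is the element
\[
 \gamma=(e_{yy}+e_{xy})(e_{yy}+e_{yz})=e_{xy}+e_{yy}+e_{yz}+e_{xz}\in FI(P,R).
\]
Both $e_{yy}+e_{xy}$ and $e_{yy}+e_{yz}$ are idempotents by \cref{e_xy-times-e_uv}, and since $(e_{yy}+e_{yz})(e_{yy}+e_{xy})=e_{yy}$, a one-line computation with \cref{e_xy-times-e_uv} shows that $\gamma^2=\gamma$. Next, by \cref{d(alpha)(xy)=sigma(xy)alpha(xy)} together with $\sigma(y,y)=0$ we get $d(\gamma)=\sigma(x,y)e_{xy}+\sigma(y,z)e_{yz}+\sigma(x,z)e_{xz}$, while a routine computation with \cref{e_xy-times-e_uv} gives
\[
 d(\gamma)\gamma=\sigma(x,y)(e_{xy}+e_{xz}),\qquad \gamma d(\gamma)=\sigma(y,z)(e_{yz}+e_{xz}).
\]
Therefore $d(\gamma)\gamma+\gamma d(\gamma)=\sigma(x,y)e_{xy}+\sigma(y,z)e_{yz}+(\sigma(x,y)+\sigma(y,z))e_{xz}$, and since $\gamma$ is idempotent this equals $d(\gamma)$ by \cref{d(e)=d(e)e+ed(e)}. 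Comparing the coefficients of $e_{xz}$ in the two expressions for $d(\gamma)$ gives \cref{sigma(xy)+sigma(yz)=sigma(xz)}.

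The one step that calls for a genuine idea is hitting upon $\gamma$: the summand $e_{xz}$ is not optional, being precisely what makes $\gamma$ idempotent, and it is through this summand that the value $\sigma(x,z)$ — which the ``local'' information sitting at $(x,y)$ and $(y,z)$ does not see — is forced to be $\sigma(x,y)+\sigma(y,z)$. Everything following the choice of $\gamma$ is routine bookkeeping with matrix units via \cref{e_xy-times-e_uv}, needing nothing about derivations beyond what is already recorded in \cref{d(e)=d(e)e+ed(e)} and \cref{d(alpha)(xy)=sigma(xy)alpha(xy)}.
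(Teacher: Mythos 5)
Your proof is correct, and it takes a genuinely different route from the paper's. The paper reduces to $x<y<z$ in the same way, but then works with the non-idempotent element $\alpha=e_{xy}+e_{yz}-e_{xz}-e_y$: it uses the local derivation property to replace $d$ by a genuine derivation $d_\alpha$ at the three entries $(x,y)$, $(y,z)$, $(x,z)$ via \cref{d(alpha)(xy)=d(alpha_x^y)(xy)}, and then needs both an external result (\cite[Lemma 4]{Khrypchenko16}, applied to $d_\alpha$) and two further Leibniz-rule manipulations of $d_\alpha$ to cancel the unwanted terms. You instead assemble the same matrix units, with all signs positive, into the idempotent $\gamma=(e_y+e_{xy})(e_y+e_{yz})=e_{xy}+e_y+e_{yz}+e_{xz}$, and then need only the identity \cref{d(e)=d(e)e+ed(e)} --- valid for any local derivation at any idempotent --- together with the formula \cref{d(alpha)(xy)=sigma(xy)alpha(xy)} already established in \cref{d(e_x)=0}, which legitimately may be assumed here since the present lemma explicitly builds on \cref{d(e_x)=0}. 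I verified the computations: $\gamma^2=\gamma$, $d(\gamma)=\sigma(x,y)e_{xy}+\sigma(y,z)e_{yz}+\sigma(x,z)e_{xz}$ (the $e_y$-term vanishes because $\sigma(y,y)=d(e_y)(y,y)=0$), $d(\gamma)\gamma=\sigma(x,y)(e_{xy}+e_{xz})$ and $\gamma d(\gamma)=\sigma(y,z)(e_{yz}+e_{xz})$ are all as you claim, so comparing the coefficients of $e_{xz}$ does yield \cref{sigma(xy)+sigma(yz)=sigma(xz)}. What your approach buys is self-containedness (no appeal to \cite{Khrypchenko16}) and a visibly shorter computation; the only idea it requires is the observation that adjoining $e_{xz}$ and $e_y$ turns $e_{xy}+e_{yz}$ into an idempotent, which is exactly the mechanism that forces the value $\sigma(x,z)$ to equal $\sigma(x,y)+\sigma(y,z)$.
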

\begin{proof}
 Clearly, \cref{sigma(xy)+sigma(yz)=sigma(xz)} holds, when $x=y$ or $y=z$, thanks to \cref{d(e_x)-is-zero,sigma=sum-d(e_xy)(xy)}. Suppose that $x<y<z$ and take 
 \begin{align}\label{alpha=e_xy+e_yz-e_xz-e_y}
 \alpha=e_{xy}+e_{yz}-e_{xz}-e_y.  
 \end{align}
Then by \cref{d(alpha)(xy)=sigma(xy)alpha(xy),alpha=e_xy+e_yz-e_xz-e_y,d(alpha)(xy)=d(alpha_x^y)(xy),alpha|_x^y} we have
 \begin{align*}
  \sigma(x,y)&=d(\alpha)(x,y)=d_{\alpha}(\alpha)(x,y)=d_{\alpha}(\alpha|_x^y)(x,y)=d_{\alpha}(e_{xy}-e_y)(x,y),\\
  \sigma(y,z)&=d(\alpha)(y,z)=d_{\alpha}(\alpha)(y,z)=d_{\alpha}(\alpha|_y^z)(y,z)=d_{\alpha}(e_{yz}-e_y)(y,z),\\
  -\sigma(x,z)&=d(\alpha)(x,z)=d_{\alpha}(\alpha)(x,z)=d_{\alpha}(\alpha|_x^z)(x,z)=d_{\alpha}(e_{xy}+e_{yz}-e_{xz})(x,z).
 \end{align*}
Adding these equalities, we get
\begin{align}
 \sigma(x,y)+\sigma(y,z)-\sigma(x,z)&=d_{\alpha}(e_{xy})(x,y)+d_{\alpha}(e_{yz})(y,z)-d_{\alpha}(e_{xz})(x,z)\label{d(e_xy)(xy)+d(e_yz)(yz)-d(e_xz)(xz)}\\
 &\quad-d_{\alpha}(e_y)(x,y)+d_{\alpha}(e_{yz})(x,z)\label{-d(e_y)(xy)+d(e_yz)(xz)}\\
 &\quad-d_{\alpha}(e_y)(y,z)+d_{\alpha}(e_{xy})(x,z).\label{-d(e_y)(yz)+d(e_xy)(xz)}
\end{align}
Observe that the right-hand side of \cref{d(e_xy)(xy)+d(e_yz)(yz)-d(e_xz)(xz)} is zero by \cite[Lemma 4]{Khrypchenko16}. To show that  \cref{-d(e_y)(xy)+d(e_yz)(xz),-d(e_y)(yz)+d(e_xy)(xz)} are also zero, write
\begin{align*}
 d_\alpha(e_{yz})(x,z)&=d_\alpha(e_ye_{yz})(x,z)=(d_\alpha(e_y)e_{yz}+e_yd_\alpha(e_{yz}))(x,z)=d_\alpha(e_y)(x,y),\\
 d_\alpha(e_{xy})(x,z)&=d_\alpha(e_{xy}e_y)(x,z)=(d_\alpha(e_{xy})e_y+e_{xy}d_\alpha(e_y))(x,z)=d_\alpha(e_y)(y,z).
\end{align*}
\end{proof}

\begin{thrm}\label{loc-der-of-FI-is-der,}
 Each $R$-linear local derivation of $FI(P,R)$ is a derivation.
\end{thrm}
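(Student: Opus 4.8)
The plan is to combine the lemmas already proved so as to reduce the theorem to a single short convolution computation. First I would invoke \cref{d(e_x)=ad_alpha(e_x)} to fix $\alpha\in FI(P,R)$ with $d(e_x)=\ad_\alpha(e_x)$ for all $x\in P$. Since $\alpha\in FI(P,R)$ and $FI(P,R)$ is closed under multiplication, $\ad_\alpha$ is an $R$-linear (inner) derivation of $FI(P,R)$; consequently $d-\ad_\alpha$ is again an $R$-linear local derivation of $FI(P,R)$ — for each $\beta\in FI(P,R)$ one has $(d-\ad_\alpha)(\beta)=(d_\beta-\ad_\alpha)(\beta)$ with $d_\beta-\ad_\alpha$ a derivation — and $(d-\ad_\alpha)(e_x)=0$ for every $x$. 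As $d$ is a derivation exactly when $d-\ad_\alpha$ is, it suffices (as already observed after \cref{d(e_x)=ad_alpha(e_x)}) to prove the theorem for an $R$-linear local derivation $d$ satisfying \cref{d(e_x)-is-zero}.

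So assume $d(e_x)=0$ for all $x\in P$. By \cref{d(e_x)=0} there is $\sigma\in I(P,R)$ with $d(\beta)(x,y)=\sigma(x,y)\beta(x,y)$ for all $\beta\in FI(P,R)$ and $x\le y$, and by \cref{sigma-cocycle} this $\sigma$ satisfies $\sigma(x,y)+\sigma(y,z)=\sigma(x,z)$ whenever $x\le y\le z$. It remains only to check the Leibniz rule, $d$ being additive (indeed $R$-linear) and mapping $FI(P,R)$ into itself by hypothesis. For $\beta,\gamma\in FI(P,R)$ and $x\le y$, I would use \cref{conv-series} and split $\sigma(x,y)=\sigma(x,z)+\sigma(z,y)$ inside the sum (valid since $x\le z\le y$) to get
\[
 d(\beta\gamma)(x,y)=\sigma(x,y)\!\!\sum_{x\le z\le y}\!\!\beta(x,z)\gamma(z,y)=\sum_{x\le z\le y}\bigl(\sigma(x,z)\beta(x,z)\bigr)\gamma(z,y)+\sum_{x\le z\le y}\beta(x,z)\bigl(\sigma(z,y)\gamma(z,y)\bigr),
\]
which by \cref{conv-series} again equals $\bigl(d(\beta)\gamma+\beta d(\gamma)\bigr)(x,y)$. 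Since $x\le y$ was arbitrary, $d(\beta\gamma)=d(\beta)\gamma+\beta d(\gamma)$, so $d$ is a derivation; this settles the reduced case, hence the theorem.

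There is no genuine obstacle left at this stage, since all the substantial work has already been carried out in the preceding lemmas, in particular \cref{d(e_x)=ad_alpha(e_x),d(e_x)=0,sigma-cocycle}. The only points needing care are the bookkeeping in the reduction (that $d-\ad_\alpha$ is still an $R$-linear local derivation, and that the reduction is reversible) and the observation that the additive cocycle identity for $\sigma$ is precisely what makes the ``diagonal scaling'' $\beta\mapsto\bigl((x,y)\mapsto\sigma(x,y)\beta(x,y)\bigr)$ compatible with the convolution product.
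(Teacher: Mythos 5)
Your proposal is correct and follows exactly the route of the paper's own proof: reduce via \cref{d(e_x)=ad_alpha(e_x)} to the case \cref{d(e_x)-is-zero}, then apply \cref{d(e_x)=0,sigma-cocycle} and verify the Leibniz rule directly from \cref{conv-series} using the cocycle identity \cref{sigma(xy)+sigma(yz)=sigma(xz)}. The only difference is that you write out the convolution computation that the paper dismisses as ``readily checked,'' which is a harmless (and welcome) elaboration.
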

\begin{proof}
 By \cref{d(e_x)=ad_alpha(e_x),d(e_x)=0,sigma-cocycle} each $R$-linear local derivation of $FI(P,R)$ is a sum of an inner derivation and a map of the form \cref{d(alpha)(xy)=sigma(xy)alpha(xy)} with $\sigma$ satisfying \cref{sigma(xy)+sigma(yz)=sigma(xz)}. It is readily checked by a direct application of \cref{conv-series} that such a map \cref{d(alpha)(xy)=sigma(xy)alpha(xy)} is a derivation (see also \cite[Lemma 3]{Khripchenko12} for a similar construction).
\end{proof}


\bibliography{bibl}{}
\bibliographystyle{acm}

\end{document}